\newtheorem{theorem}{Theorem}[section]
\newtheorem{lemma}[theorem]{Lemma}
\def\barr{\begin{array}}
\def\earr{\end{array}}
\title{On a group-theoretical generalization of the Euler's totient function}
\author{Marius T\u arn\u auceanu}
\date{October 26, 2021}
\begin{document}

\maketitle

\begin{abstract}
Let $G$ be a finite group and $\varphi(G)=|\{a\in G \mid o(a)=\exp(G)\}|$, where $o(a)$ denotes the order of $a$ in $G$ and $\exp(G)$ denotes
the exponent of $G$. Under a natural hypothesis, in this note we determine the groups $G$ such that $\forall\, H,K\leq G$, $H\subseteq K$
implies $\varphi(H)\mid\varphi(K)$. This partially answers Problem 5.4 in \cite{6}.
\end{abstract}
\smallskip

{\small
\noindent
{\bf MSC 2020\,:} Primary 20D60, 11A25; Secondary 20D99, 11A99.

\noindent
{\bf Key words\,:} Euler's totient function, finite group, order of an element, exponent of a group, nilpotent group, Schmidt group.}

\section{Introduction}

Given a finite group $G$, we consider the function
\begin{equation}
\varphi(G)=|\{a\in G \mid o(a)=\exp(G)\}|\nonumber
\end{equation}introduced in our previous paper \cite{6}. Since $\varphi(\mathbb{Z}_n)=\varphi(n)$, for all $n\in\mathbb{N}^*$, it generalizes the well-known Euler's totient function. In \cite{7}, the function $\varphi$ was used to provide a nilpotency criterion for finite groups, namely
\begin{equation}
G \mbox{ is nilpotent if and only if } \varphi(S)\neq 0 \mbox{ for any section } S \mbox{ of } G.\nonumber
\end{equation}This has been extended to a $p$-nilpotency criterion by A.D. Ramos and A. Viruel \cite{3}. We recall that the weaker condition
\begin{equation}
\varphi(S)\neq 0 \mbox{ for any subgroup } S \mbox{ of } G
\end{equation}does not guarantee the nilpotency of $G$, as it is shown in \cite{7}.\newpage

In the current note, we will describe finite groups $G$ such that $\forall\, H,K\leq G$, $H\subseteq K$ implies $\varphi(H)\mid\varphi(K)$. To avoid the case in which $\varphi(H)=0$, we will assume the condition (1) to be satisfied. Thus, we partially answer Problem 5.4 in \cite{6}.

Our main result is stated as follows.

\begin{theorem}
Let $G$ be a finite group satisfying the condition (1). Then
\begin{equation}
\forall\, H,K\leq G, H\subseteq K \mbox{ implies } \varphi(H)\mid\varphi(K)
\end{equation}if and only if $G$ is nilpotent and its Sylow subgroups are cyclic, $Q_8$ or $\mathbb{Z}_p\times\mathbb{Z}_p$ with $p$ prime.
\end{theorem}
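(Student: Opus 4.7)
The plan is to prove the two directions separately, with the substantial work going into the ``only if'' direction.

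Both hypotheses pass to subgroups of $G$: if $H\leq K\leq S\leq G$, then $\varphi(H)\mid\varphi(K)$ holds by the assumption on $G$, and (1) restricts to every $S\leq G$. Hence each subgroup of $G$, in particular each Sylow subgroup, inherits the full hypothesis.

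The first main step is a classification of the finite $p$-groups $P$ satisfying the divisibility property; condition~(1) is automatic here since the exponent of a $p$-group is always attained. I aim to show $P$ is cyclic, $Q_8$, or $\mathbb{Z}_p\times\mathbb{Z}_p$. If $\exp(P)=p$, every subgroup of order $p^j$ has $\varphi=p^j-1$, and since $P$ has subgroups of every $p$-power order up to $|P|=p^k$, divisibility forces $(p^j-1)\mid(p^k-1)$, i.e.\ $j\mid k$, for all $j\leq k$, which gives $k\leq 2$. If $\exp(P)\geq p^2$, I split according to whether $P$ has a unique subgroup of order $p$: in that case $P$ is cyclic or generalized quaternion, and $Q_{2^n}$ with $n\geq 4$ is ruled out by comparing $\varphi(Q_8)=6$ with $\varphi(Q_{2^n})=2^{n-2}$; otherwise $P$ contains $\mathbb{Z}_p\times\mathbb{Z}_p$, and I enlarge this to a subgroup $K\leq P$ of order $p^3$ (possible in every $p$-group because a proper subgroup lies strictly inside its normalizer), then check across the groups of order $p^3$ containing $\mathbb{Z}_p\times\mathbb{Z}_p$ that the factor $p+1$ of $\varphi(\mathbb{Z}_p\times\mathbb{Z}_p)=(p-1)(p+1)$ fails to divide $\varphi(K)$ in every case. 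This forces $|P|=p^2$, so $P=\mathbb{Z}_p\times\mathbb{Z}_p$.

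Applying the classification to each Sylow subgroup of $G$ yields the prescribed Sylow structure. To show $G$ is nilpotent I argue by contradiction: otherwise $G$ contains a minimal non-nilpotent Schmidt subgroup $S=P\rtimes Q$ with $P$ the normal Sylow $p$-subgroup and $Q$ a cyclic Sylow $q$-subgroup. Since $P$ and $Q$ are subgroups of Sylow subgroups of $G$, combining this with Schmidt's structure theorem ($\Phi(P)\leq Z(S)$ and a faithful irreducible action of $Q/\Phi(Q)$ on $P/\Phi(P)$) restricts $P$ to $\mathbb{Z}_p$, $\mathbb{Z}_p\times\mathbb{Z}_p$, or $Q_8$ (the cyclic case collapses to $|P|=p$ because automorphisms of $\mathbb{Z}_{p^k}$ trivial on $\Phi(P)$ form a $p$-group, so a $q$-group acting non-trivially cannot fix $\Phi(P)$; and $q=3$ when $P=Q_8$ by irreducibility on $\mathbb{F}_2^2$). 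In each admissible case $\exp(\Phi(P))<\exp(P)$; moreover $C_P(Q)\leq\Phi(P)$ by faithful irreducibility, and a Kaloujnine-type kernel argument shows $\Phi(Q)$ centralizes $P$. Decomposing any $g\in S$ into commuting $p$- and $q$-parts, if the $q$-part has full order $|Q|$ then the $p$-part lies in $C_P(Q)\leq\Phi(P)$, while if it has order at most $|\Phi(Q)|$ it centralizes $P$. Thus $|g|\leq\max(\exp(\Phi(P))\cdot|Q|,\ \exp(P)\cdot|\Phi(Q)|)<\exp(P)\cdot|Q|=\exp(S)$, so $\varphi(S)=0$, contradicting~(1) applied to $S\leq G$.

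For the converse, a nilpotent $G$ with the prescribed Sylow structure decomposes as $G=P_1\times\cdots\times P_r$, every subgroup $H\leq G$ factors as $\prod_i(H\cap P_i)$ because the $|P_i|$ are pairwise coprime, and both $\exp$ and $\varphi$ are multiplicative over these decompositions; the required divisibility then reduces to the three $p$-group cases, each verified by direct inspection of the subgroup lattice. The main obstacle I anticipate is the $p$-group classification at order $p^3$; I plan to isolate $p+1$ as the uniform obstruction divisor.
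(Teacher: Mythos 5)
Your proposal is correct, and while the $p$-group half mirrors the paper, the nilpotency step takes a genuinely different route. The classification of Sylow subgroups is essentially the paper's Lemma 2.1 reorganized by exponent rather than by commutativity: both arguments reduce to the same check over the groups of order $p^3$ containing $\mathbb{Z}_p\times\mathbb{Z}_p$ and the same dichotomy (unique subgroup of order $p$ gives cyclic or generalized quaternion, with $Q_{2^n}$, $n\geq 4$, killed by $6\nmid 2^{n-2}$); your uniform obstruction $p+1\nmid\varphi(K)$ and the observation that $(p^j-1)\mid(p^k-1)$ forces $j\mid k$ in the exponent-$p$ case are tidier than a case-by-case verification. (One wording slip: in the branch $\exp(P)\geq p^2$ with a noncyclic abelian subgroup, the order-$p^3$ contradiction shows the branch is vacuous rather than forcing $|P|=p^2$; this does not affect the logic.) Where you genuinely diverge is nilpotency. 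The paper takes a minimal counterexample, which is a Schmidt group $G=P\rtimes Q$, and in each of the three cases for $P$ derives a contradiction from condition (1) by exhibiting a cyclic subgroup of order $pq$ in the Schmidt quotient $G/Z(G)$, which cannot exist. You instead take an arbitrary Schmidt subgroup $S=P\rtimes Q$ of a non-nilpotent $G$ and show directly that $\varphi(S)=0$ by bounding element orders: when the $q$-part of $g$ generates a Sylow $q$-subgroup, the $p$-part is trapped in $C_P(Q)\leq\Phi(P)$, whose exponent is strictly below $\exp(P)$ in all three admissible cases, while a smaller $q$-part lies in $\Phi(Q)\leq Z(S)$; either way $o(g)<\exp(P)\cdot|Q|=\exp(S)$. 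This buys a single unified contradiction with condition (1) covering all three cases at once, at the price of invoking a bit more of Schmidt's structure theory (irreducibility of the $Q$-action on $P/\Phi(P)$ and $\Phi(Q)\leq Z(S)$) than the paper's quotient argument. The converse direction is identical in both treatments: decompose along the direct product of Sylow subgroups, note that $\varphi$ and $\exp$ are multiplicative over such decompositions, and inspect the subgroup lattices of the three $p$-group types.
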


Next we recall two important classes of finite groups that will be used in our note:
\begin{itemize}
\item[$\bullet$]A \textit{generalized quaternion $2$-group} is a group of order $2^n$ for some positive integer $n\geq 3$, defined by the presentation
\begin{equation}
Q_{2^n}=\langle a,b \mid a^{2^{n-2}}= b^2, a^{2^{n-1}}=1, b^{-1}ab=a^{-1}\rangle.\nonumber
\end{equation}These groups are the unique finite non-cyclic $p$-groups all of whose abelian subgroups are cyclic, or equivalently the unique finite non-cyclic $p$-groups possessing exactly one subgroup of order $p$ (see e.g. (4.4) of \cite{5}, II).
\item[$\bullet$]A \textit{Schmidt group} is a non-nilpotent group all of whose proper subgroups are nilpotent. By \cite{4} (see also \cite{1,2}), such a group $G$ is solvable of order $p^mq^n$ (where $p$ and $q$ are different primes) with a unique Sylow $p$-subgroup $P$ and a cyclic Sylow $q$-subgroup $Q$, and hence $G$ is a semidirect product of $P$ by $Q$. Moreover, we have:
\begin{itemize}
\item[-] if $Q=\langle y\rangle$, then $y^q\in Z(G)$;
\item[-] $Z(G)=\Phi(G)=\Phi(P)\times\langle y^q\rangle$, $G'=P$, $P'=(G')'=\Phi(P)$;
\item[-] $|P/P'|=p^r$, where $r$ is the order of $p$ modulo $q$;
\item[-] if $P$ is abelian, then $P$ is an elementary abelian $p$-group of order $p^r$ and $P$ is a minimal normal subgroup of $G$;
\item[-] if $P$ is non-abelian, then $Z(P)=P'=\Phi(P)$ and $|P/Z(P)|=p^r$.
\end{itemize}We mention that $G/Z(G)$ is also a Schmidt group of order $p^rq$ which can be written as semidirect product of an elementary abelian $p$-group of order $p^r$ by a cyclic group of order $q$, and that it does not have cyclic subgroups of order $pq$.
\end{itemize}

Most of our notation is standard and will not be repeated here. Basic definitions and results on group theory can be found in \cite{5}.

\section{Proof of Theorem 1.1}

First of all, we prove two auxiliary results.

\begin{lemma}
Let $G$ be a finite $p$-group satisfying the condition (2). Then $G$ is cyclic, $Q_8$ or $\mathbb{Z}_p\times\mathbb{Z}_p$.
\end{lemma}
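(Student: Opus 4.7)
The plan is to exploit the classical dichotomy recalled in the introduction: a non-cyclic finite $p$-group either has a unique subgroup of order $p$ (and is therefore a generalized quaternion $2$-group $Q_{2^n}$, $n\geq 3$), or it admits a non-cyclic abelian subgroup and hence contains $\mathbb{Z}_p\times\mathbb{Z}_p$ as a subgroup. I would split the argument into these two situations.

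In the first situation, if $G$ is cyclic there is nothing to prove, and if $G=Q_8$ the conclusion already holds. For $G=Q_{2^n}$ with $n\geq 4$, I would derive a contradiction with~(2) as follows. Using the presentation given in the excerpt, one checks that $\langle a^{2^{n-3}},b\rangle\leq Q_{2^n}$ is isomorphic to $Q_8$, so its $\varphi$-value is $6$. On the other hand, the elements of maximal order $2^{n-1}$ in $Q_{2^n}$ are precisely the generators of the cyclic maximal subgroup $\langle a\rangle$, so $\varphi(Q_{2^n})=\varphi(2^{n-1})=2^{n-2}$. Since $3\nmid 2^{n-2}$, the divisibility $\varphi(Q_8)\mid\varphi(Q_{2^n})$ fails.

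In the second situation, assume $\mathbb{Z}_p\times\mathbb{Z}_p\leq G$. If $G=\mathbb{Z}_p\times\mathbb{Z}_p$ we are done, so suppose $|G|\geq p^3$. By the standard fact that normalizers in $p$-groups grow strictly, I can pick an element $g\in N_G(\mathbb{Z}_p\times\mathbb{Z}_p)\setminus(\mathbb{Z}_p\times\mathbb{Z}_p)$ whose image in $N_G(\mathbb{Z}_p\times\mathbb{Z}_p)/(\mathbb{Z}_p\times\mathbb{Z}_p)$ has order $p$; then $K:=\langle\mathbb{Z}_p\times\mathbb{Z}_p,g\rangle\leq G$ has order $p^3$ and contains $\mathbb{Z}_p\times\mathbb{Z}_p$. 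Because $Q_8$ has only one involution, $K\not\cong Q_8$, and clearly $K$ is neither cyclic nor of order $p^2$. The classification of groups of order $p^3$ then forces $K$ to be one of $\mathbb{Z}_{p^2}\times\mathbb{Z}_p$, $\mathbb{Z}_p^3$, the Heisenberg group $H_p$ (for $p$ odd), the non-abelian group $\mathbb{Z}_{p^2}\rtimes\mathbb{Z}_p$ of exponent $p^2$ (for $p$ odd), or $D_8$ (for $p=2$). A routine count gives $\varphi(K)=p^3-1$ when $\exp(K)=p$, $\varphi(K)=p^2(p-1)$ when $\exp(K)=p^2$ and $p$ is odd, and $\varphi(D_8)=2$. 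Comparing with $\varphi(\mathbb{Z}_p\times\mathbb{Z}_p)=p^2-1$, the divisibility fails in every case: $p^3-1\equiv p-1\pmod{p^2-1}$, $(p+1)\nmid p^2$, and $3\nmid 2$. This contradicts~(2) applied to the pair $\mathbb{Z}_p\times\mathbb{Z}_p\subseteq K$.

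The main obstacle is the enumeration and $\varphi$-computation in the second situation: each subcase is a short calculation, but collectively they rely on the complete classification of groups of order $p^3$. A secondary subtlety is guaranteeing the existence of the intermediate subgroup $K$ of order $p^3$, which I extract from the nilpotency of $G$ via the strict growth of normalizers.
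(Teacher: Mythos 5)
Your proof is correct and follows essentially the same route as the paper's: both arguments reduce to checking that $\varphi(\mathbb{Z}_p\times\mathbb{Z}_p)=p^2-1$ fails to divide $\varphi(K)$ for every group $K$ of order $p^3$ containing $\mathbb{Z}_p\times\mathbb{Z}_p$, and that $\varphi(Q_8)=6$ does not divide $\varphi(Q_{2^n})=2^{n-2}$ when $n\geq 4$. (One cosmetic slip: your exponent-$p^2$ count need not be restricted to odd $p$, since $K\cong\mathbb{Z}_4\times\mathbb{Z}_2$ also occurs for $p=2$; the same computation $\varphi(K)=4$ and $3\nmid 4$ disposes of that subcase.)
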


\begin{proof}
Assume first that $G$ is abelian. If it is not cyclic, then there exists $H\leq G$ such that $H\cong\mathbb{Z}_p^2$. Let $K$ be a subgroup of order $p^3$ of $G$ which contains $H$. Then either $K\cong\mathbb{Z}_p^3$ or $K\cong\mathbb{Z}_p\times\mathbb{Z}_p^2$. It follows that $\varphi(H)=p^2-1$ divides either $\varphi(\mathbb{Z}_p^3)=p^3-1$ or $\varphi(\mathbb{Z}_p\times\mathbb{Z}_{p^2})=p^2(p-1)$, a contradiction. Thus such a subgroup $K$ does not exist, showing that either $G$ is cyclic or $G=H\cong\mathbb{Z}_p^2$.

Assume now that $G$ is not abelian. Since the condition (2) is inherited by subgroups, we infer that all abelian subgroups of $G$ are either cyclic or of type $\mathbb{Z}_p^2$. Suppose that $G$ has an abelian subgroup $H\cong\mathbb{Z}_p^2$. Then $H$ is contained in a subgroup $K$ of order $p^3$ of one of the following types:
\begin{itemize}
\item[-] $\mathbb{Z}_p^3$;
\item[-] $\mathbb{Z}_p\times\mathbb{Z}_{p^2}$;
\item[-] $M(p^3)=\langle x,y \mid x^{p^2}=y^p=1, y^{-1}xy=x^{p+1}\rangle$;
\item[-] $E(p^3)=\langle x,y \mid x^p=y^p=[x,y]^p=1, [x,y]\in Z(E(p^3))\rangle$;
\item[-] $D_8$.
\end{itemize}It is easy to see that in all cases $\varphi(H)$ does not divide $\varphi(K)$, contradicting our hypothesis. Consequently, all abelian subgroups of $G$ are cyclic, implying that $G\cong Q_{2^n}$ for some $n\geq 3$. If $n\geq 4$, then $G$ has a subgroup of type $Q_8$ and so
\begin{equation}
6=\varphi(Q_8)\mid\varphi(G)=2^{n-2},\nonumber
\end{equation}a contradiction. Hence $G\cong Q_8$, as desired.
\end{proof}

\begin{lemma}
Let $G$ be a finite group satisfying the conditions (1) and (2). Then $G$ is nilpotent.
\end{lemma}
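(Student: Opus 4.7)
The plan is to argue by contradiction: assume $G$ is not nilpotent, so $G$ contains a minimal non-nilpotent (Schmidt) subgroup $S$, and show that $\varphi(S)=0$, contradicting (1). This forces $G$ to be nilpotent.

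Let $S\leq G$ be such a Schmidt subgroup, with the structure recalled in the introduction: $|S|=p^mq^n$ with $P$ the normal Sylow $p$-subgroup, $Q=\langle y\rangle$ a cyclic Sylow $q$-subgroup, and $y^q\in Z(S)$. Both conditions (1) and (2) are inherited by subgroups of $G$, so the $p$-group $P\leq G$ satisfies (2) and, by Lemma 2.1, $P$ is cyclic, $Q_8$, or $\mathbb{Z}_p\times\mathbb{Z}_p$. Combining this with the Schmidt identity $|P/P'|=p^r$ (where $r$ is the order of $p$ modulo $q$) and the fact that an abelian Sylow $p$-subgroup of a Schmidt group is elementary abelian of order $p^r$, exactly three cases remain: (a) $P\cong\mathbb{Z}_p$ with $r=1$; (b) $P\cong Q_8$ with $p=2$, $q=3$, $r=2$; (c) $P\cong\mathbb{Z}_p\times\mathbb{Z}_p$ with $r=2$, hence $q\mid p+1$.

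In each case I would compute $\exp(S)=\exp(P)\cdot q^n$ and show that no element of $S$ attains this order. The uniform idea is that $\langle y^q\rangle$ is precisely the kernel of the conjugation action of $\langle y\rangle$ on $P$ (if the kernel were larger, $Q$ would act trivially and $S$ would be nilpotent), so every $y^i$ of order $q^n$ acts on $P$ as a non-trivial $q$-automorphism. A hypothetical element $xy^i$ of order $\exp(P)\cdot q^n$ would then require some $x\in P$ of order $\exp(P)$ fixed by this automorphism. In case (a), a non-trivial order-$q$ automorphism of $\mathbb{Z}_p$ fixes only $0$. In case (b), a non-trivial order-$3$ automorphism of $Q_8$ permutes the six elements of order $4$ in two orbits of length $3$, hence fixes no element of order $4$. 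In case (c), an order-$q$ element of $\mathrm{GL}_2(\mathbb{F}_p)$ has characteristic polynomial irreducible over $\mathbb{F}_p$ (because $q\mid p^2-1$ but $q\nmid p-1$), and therefore fixes only the zero vector.

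Thus $\varphi(S)=0$ in every case, contradicting (1) applied to $S$, and so $G$ must be nilpotent. The main obstacle is the case-by-case fixed-point analysis above; once Lemma 2.1 and the Schmidt structure recalled in the introduction pin down the three possibilities for $P$, each fixed-point claim is elementary but has to be carried out in its own setting (multiplication on $\mathbb{Z}_p$, outer automorphisms of $Q_8$, and $\mathrm{GL}_2$ over $\mathbb{F}_p$).
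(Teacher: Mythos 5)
Your proposal is correct, and while it shares the paper's overall architecture (reduce to a Schmidt group, invoke Lemma 2.1 to pin down the normal Sylow $p$-subgroup $P$, then derive a contradiction with $\varphi\neq 0$ in three cases), the way you close each case is genuinely different. The paper takes a minimal counterexample, so that $G$ itself is a Schmidt group, and then in the cases $P\cong\mathbb{Z}_p\times\mathbb{Z}_p$ and $P\cong Q_8$ it passes to $G/Z(G)$ and simply quotes the structural fact (recalled without proof in the introduction) that this quotient, a Schmidt group of order $p^rq$ with elementary abelian Sylow $p$-subgroup, has no cyclic subgroup of order $pq$; in the cyclic case it observes that $\exp(G)=|G|$ would force $G$ to be cyclic. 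You instead locate a Schmidt subgroup $S$ of a hypothetical non-nilpotent $G$ (equivalent to the minimal-counterexample device, since both conditions pass to subgroups) and prove by hand that $\varphi(S)=0$, via the observation that $C_Q(P)=\langle y^q\rangle$ and a fixed-point analysis of the resulting order-$q$ automorphism of $P$ in each of the three shapes. Your route is more computational but more self-contained: it effectively supplies a proof of the ``no cyclic subgroups of order $pq$'' property that the paper only cites, and your irreducibility argument in the $\mathbb{Z}_p\times\mathbb{Z}_p$ case (using that the order of $p$ mod $q$ is $r=2$, so $1$ is not an eigenvalue) is exactly the right mechanism. Two small points to tighten in a full write-up: an element of order $\exp(P)q^n$ should be split into its commuting $p$-part $a\in P$ and $q$-part $b$ (rather than literally reading off $x$ and $y^i$ from the factorization $xy^i$), and $b$ lies in some conjugate of $Q$ rather than in $Q$ itself --- harmless, since the kernel $\langle y^q\rangle$ of the action is central and hence the same for every Sylow $q$-subgroup.
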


\begin{proof}
Assume that $G$ is a counterexample of minimal order. Then all proper subgroups of $G$ also satisfy the conditions (1) and (2), and therefore $G$ is a Schmidt group. Suppose that it has the structure described in Introduction. By Lemma 2.1, we distinguish the following three cases:
\begin{itemize}
\item[{\rm a)}] $P$ is cyclic
\end{itemize}Then $\exp(G)=p^mq^n$ and so the condition $\varphi(G)\neq 0$ implies that $G$ is cyclic, a contradiction.
\begin{itemize}
\item[{\rm b)}] $P\cong\mathbb{Z}_p\times\mathbb{Z}_p$ with $p$ prime
\end{itemize}Then $Z(G)=\langle y^q\rangle$ and $\exp(G)=pq^n$. Since $\varphi(G)\neq 0$, there exists a cyclic subgroup $M\leq G$ of order $pq^n$. Note that we have $Z(G)\subset M$. It follows that the Schmidt group $G/Z(G)$ of order $p^2q$ contains the cyclic subgroup $M/Z(G)$ of order $pq$, a contradiction.
\begin{itemize}
\item[{\rm c)}] $P\cong Q_8$
\end{itemize}We have $|{\rm Aut}(Q_8)|=24$. Since $G$ is a non-trivial semidirect product of $Q_8$ and $\mathbb{Z}_{q^n}$, we get $q=3$ and so $G\cong Q_8\rtimes\mathbb{Z}_{3^n}$. Similarly with b), we obtain that the Schmidt group $G/Z(G)\cong A_4$ has a cyclic subgroup of order $6$, a contradiction.

Hence such a group $G$ does not exist, as desired.
\end{proof}

We are now able to prove our main result.

\bigskip\noindent{\bf Proof of Theorem 1.1.} The direct implication follows from Lemmas 2.1 and 2.2. Conversely, we observe that it suffices to prove the condition (2) for cyclic $p$-groups, $Q_8$ or $\mathbb{Z}_p\times\mathbb{Z}_p$ with $p$ prime because a nilpotent group is the direct product of its Sylow subgroups. This is obvious, completing the proof.\qed

\vspace*{3ex}\small

\hfill
\begin{minipage}[t]{5cm}
Marius T\u arn\u auceanu \\
Faculty of  Mathematics \\
``Al.I. Cuza'' University \\
Ia\c si, Romania \\
e-mail: {\tt tarnauc@uaic.ro}
\end{minipage}

\end{document}